\theoremstyle{plain}
\newtheorem{theorem}{Theorem}
\newtheorem{lemma}[theorem]{Lemma}
\newtheorem*{proposition}{Proposition}
\newtheorem*{main}{Main Theorem}
\theoremstyle{definition}
\newtheorem*{notation}{Assumptions and Notation}
\newtheorem{remark}{Remark}
\newcommand{\mb}{\mathbb}
\newcommand{\mc}{\mathcal}
\newcommand{\mr}{\mathrm}
\newcommand{\vp}{\varphi}
\newcommand{\ve}{\varepsilon}
\newcommand{\cv}{\nabla}
\newcommand{\icpn}{\int_{\mb{CP}^N}}
\newcommand{\fs}{_{\mr{FS}}}
\newcommand{\lb}{\langle}
\newcommand{\rb}{\rangle}
\newcommand{\dv}{\mr{div}}
\newcommand{\D}[1]{
	\ifnum1=#1
		{\frac{\mr d}{\mr d s}\Big|_{s=0}}
	\else
		{\frac{\mr d^{#1}}{\mr d s^{#1}}\Big|_{s=0}}
	\fi}
\DeclareMathOperator{\rc}{Rc}
\DeclareMathOperator{\Rm}{Rm}
\DeclareMathOperator{\tr}{tr}
\newcounter{mnotecount}[section]
\let\oldmarginpar\marginpar
\renewcommand\marginpar[1]{\-\oldmarginpar[\raggedleft\footnotesize #1]
{\raggedright\footnotesize #1}}
\begin{document}

\title{Dynamic instability of $\mb{CP}^N$ under Ricci flow}

\author{Dan Knopf}
\address[Dan Knopf]{University of Texas at Austin}
\email{danknopf@math.utexas.edu}
\urladdr{http://www.ma.utexas.edu/users/danknopf}

\author{Nata\v sa \v Se\v sum}
\address[Nata\v sa \v Se\v sum]{Rutgers University}
\email{natasas@math.rutgers.edu}
\urladdr{http://www.math.rutgers.edu/$\sim$natasas/}

\begin{abstract}
The intent of this short note is to provide context for and an independent proof of the discovery
of Klaus Kr\"oncke~\cite{Kro13} that complex projective space with its canonical Fubini--Study
metric is dynamically unstable under Ricci flow in all complex dimensions $N\geq2$. The
unstable perturbation is not K\"ahler. This provides a counterexample to a well known conjecture
widely attributed to Hamilton. Moreover, it shows that the expected stability of the subspace of
K\"ahler metrics under Ricci flow, another conjecture believed by several experts, needs to be
interpreted in a more nuanced way than some may have expected.
\end{abstract}


\maketitle
\setcounter{tocdepth}{1}
\tableofcontents

\section{Introduction}

Ricci solitons are stationary solutions of the Ricci flow dynamical system on the space of Riemannian metrics,
modulo diffeomorphism and scaling. As such, it is natural to investigate their stability. Several distinct but related
notions of stability appear in the literature. \textsc{(i)} One says a soliton metric $g_0$ is
\emph{dynamically stable} if for every neighborhood $\mc V$ of $g_0$ there exists a neighborhood
$\mc U\subseteq\mc V$ such that every (modified) Ricci flow solution originating in $\mc U$ remains in $\mc V$
and converges to a soliton in $\mc V$. (This is subtly different from the classical definition for continuous-time
dynamical systems for two reasons. Unless a soliton is Ricci flat, one needs to modify the flow to make the soliton
into a  \emph{bona fide} fixed point. Furthermore, stationary solutions frequently occur in families, so one typically
has to deal with the presence of a center manifold.) \textsc{(ii)} One says $g_0$ is \emph{variationally stable} if the 
second variation of an appropriate choice of Perelman's energy or entropy functionals is nonpositive at  $g_0$. 
(Note that the expander entropy was defined in~\cite{FIN05}.) \textsc{(iii)} One says that $g_0$ is
\emph{linearly  stable} if the linearization of Ricci flow at $g_0$  has  nonpositive spectrum. Variational and linear stability
are equivalent in the compact case, in the precise sense that the second-variation operator $\mc N$ defined in~\eqref{sv}
below  satisfies $2\mc N=\Delta_\ell-1/\tau$ when acting on trace-free divergence-free tensors, where $\Delta_\ell$ is the
Lichnerowicz Laplacian.\footnote{There is a subtlety in this equivalence and in proving \textsc{(iii)} $\Rightarrow$ 
\textsc{(i)} that should be noted. The linearization of Ricci flow is not strictly parabolic, because the flow is invariant under the
action of the infinite-dimensional diffeomorphism group. So one either has to work orthogonally to the orbit of that group,
or else fix a gauge, which converts the linearization into the  Lichnerowicz Laplacian. Such a choice is necessary if one wishes to 
prove  dynamic stability by standard semigroup methods, but can create a finite-dimensional unstable manifold within the orbit of 
the  diffeomorphism group, especially in the presence of positive curvature. See,  \emph{e.g.,} Lemma~7 and the remarks
that follow in~\cite{Kno09}.}

Techniques to prove the implication \textsc{(iii)} $\Rightarrow$ \textsc{(i)} for flat metrics were first developed
by one of the authors and collaborators.~\cite{GIK02}. Generalizing these, the other author proved for Ricci-flat
metrics that \textsc{(i)} $\Rightarrow$ \textsc{(iii)}, and that the  converse holds if $g_0$ is integrable~\cite{Ses06}.
Haslhofer--M\"uller removed the integrability assumption, replacing it with the slightly stronger hypothesis that
$g_0$ is a local maximizer of Perelman's $\lambda$-functional \cite{HM14}. This must be verified directly and
does not follow from \textsc{(ii)}, because these metrics are only weakly variationally stable. In any case, the implication \textsc{(i)} $\Rightarrow$ \textsc{(ii)} is clear for all compact solitons.

There are many other applications of stability theory to Ricci flow, too numerous to survey thoroughly
here. An interested reader may consult the following (not complete!) list of examples, ordered by publication year:
\cite{Ye93}, \cite{DWW05},  \cite{DWW07}, \cite{SSS08}, \cite{Kno09}, \cite{LY10},  \cite{SSS11},  \cite{Has12}, 
\cite{Wu13}, \cite{Bam14},  and \cite{WW16}.

\medskip
Kr\"oncke extended the techniques of Haslhofer--M\"uller~\cite{HM14} to study stability at general
compact Einstein manifolds. Our interest here is narrower. In his investigation, Kr\"oncke discovered 
(Corollary~1.8) the unexpected fact that complex projective space with its canonical Fubini--Study metric
is dynamically unstable. We note that Kr\"oncke's work~\cite{Kro13} was accepted for publication after the
original version of this paper was completed.  Nonetheless, we believe his instability result deserves
independent attention because of its considerable broad interest, for at least three reasons:
\smallskip

\textsc{(a)} The result negatively answers a well-known conjecture widely attributed to Hamilton.
See, \emph{e.g.,} the discussion of his conjecture in the introduction to~\cite{CZ12}, as well as
Hamilton's own analysis of the behavior of $\mb{CP}^2$ in Section~10 of~\cite{Ham95}.

\textsc{(b)} The result shows that the conjecture of experts that the subspace of K\"ahler metrics is an
attractor for Ricci flow involves more complicated dynamical behavior than some may have expected. It is well
known that $(\mb{CP}^N,g_{\mr{FS}})$ is stable under K\"ahler perturbations~\cite{TZ13}. And it is easy to
see that its unstable perturbation is not K\"ahler. However, if a flow originating at this perturbation
asymptotically approaches the subspace of K\"ahler metrics, monotonicity of Perelman's shrinker
entropy $\nu$ implies that it cannot do so at the nearest K\"ahler candidate $(\mb{CP}^N,g\fs)$.
Rather, if it converges to a K\"ahler singularity model, that metric must be sufficiently far away.

\textsc{(c)} Finally, in real dimension $n=4$, dynamic instability of $\mb{CP}^2$ shows that only the first four metrics
in the  list of conjectured stable singularity models, $\mc S^4 > \mc S^3\times\mb R > \mc S^2\times\mb R^2 > \mc L^2_{-1} > \mb{CP}^2$,
ordered by the central density $\Theta$ introduced in~\cite{CHI04}, are candidates to be generic
singularity models. Here, $\mc L^2_{-1}$ denotes the ``blowdown soliton'' discovered in~\cite{FIK03}.
At least in this dimension, it is reasonable to conjecture that solutions originating at the unstable
perturbation of $\mb{CP}^2$ become singular in finite time by crushing the distinguished fiber
$\mb{CP}^1\subset\mb{CP}^2$, and converge after parabolic rescaling to $\mc L^2_{-1}$.
For further context, see the discussion and related results in~\cite{IKS17}.

\medskip
Because of these important consequences, we believe that it is valuable to produce an independent proof of  Kr\"oncke's discovery, using related but distinct methods. That is the purpose of this short
note, in which we reprove the following:

\begin{main}
For all complex dimensions $N\geq2$, complex projective space with its canonical Fubini--Study metric,
$(\mb{CP}^N,g\fs)$, is not a local maximum of Perelman's shrinker entropy $\nu$. Consequently,
it is dynamically unstable under Ricci flow. The unstable perturbation is conformal but not K\"ahler.
\end{main}

\section{Perelman's entropy}

Perelman's entropy functional $\mc W$, introduced in~\cite{Per02}, is defined on a closed Riemannian
manifold $(\mc M^n,g)$ by
\[
\mc W(g,f,\tau)=\int_{\mc M^n}\Big\{\tau\big(R+|\cv f|^2\big)+(f-n)\Big\}
(4\pi\tau)^{-\frac{n}{2}}e^{-f}\,\mr dV.
\]
Suitably minimizing $\mc W$ yields the shrinker entropy,
\[
\nu[g]=\inf\Big\{\mc W(g,f,\tau)\colon f\in C^\infty(\mc M^n),\,\tau>0,\,(4\pi\tau)^{-n/2}\int e^{-f}\,\mr dV=1\Big\}.
\]
Along a smooth variation $g(s)$ such that ${\D1}g(s)=h$, Perelman showed that
\[
{\D1}\nu\big[g(s)\big]=(4\pi\tau)^{-n/2}\int_{\mc M^n}\big\lb\tfrac12g-\tau(\rc+\cv^2 f),\,h\big\rb\,e^{-f}\,\mr dV.
\]
From this, he obtained the beautiful result that $\nu$ is nondecreasing along any compact solution of
Ricci flow --- in fact, strictly increasing except on gradient shrinking solitons normalized so that
\begin{equation}	\label{soliton}
\rc+\cv^2 f=\frac{1}{2\tau}g.
\end{equation}

As originally observed in~\cite{CHI04} --- see~~\cite{CZ12} for details, along with a complete derivation
of the strictly more complicated formula that holds at a nontrivial shrinking soliton --- if $g(s)$ is a smooth family
of metrics on $\mc M^n$ such that $g=g(0)$ is Einstein, then the second variation of $\nu$ at $s=0$ is given by
\begin{equation}	\label{SecondVariation}
{\D2} \nu\big[g(s)\big]=\frac{\tau}{V}\int_{\mc M^n} \lb \mc N(h),h\rb\,\mr dV,
\end{equation}
where $V=\mr{Vol}(\mc M^n,g)$, and
\begin{equation}	\label{sv}
\mc N(h)=\frac12\Delta h + \Rm(h,*)+\dv^*\dv\,h +\frac12\cv^2 v_h -\frac{\bar H}{2n\tau}g.
\end{equation}
Here $\bar H = (\int_{\mc M^n}H\,\mr dV)/V$ is the mean of $H=\tr_g h$,
and $v_h$ at $s=0$ is the unique solution of
\begin{equation}	\label{v_at_s=0}
\Big(\Delta+\frac{1}{2\tau}\Big)v_h=\cv^k\cv^\ell h_{\ell k}\qquad\mbox{satisfying}\qquad
\int_{\mc M^n}v_h\,\mr dV=0.
\end{equation}
In components, we write~\eqref{sv} as
\begin{multline}		\label{N}
\big(\mc N(h)\big)_{ij}=\frac12(\Delta h)_{ij}+R_{kij}^\ell g^{km}h_{m\ell}
-\frac12 g^{k\ell}(\cv_i\cv_\ell h_{kj}+\cv_j\cv_\ell h_{ki})\\
+\frac12\cv_i\cv_j v_h -\frac{\bar H}{2n\tau}g_{ij},
\end{multline}
where our index convention is $R_{ijk}^\ell=\partial_i\Gamma_{jk}^\ell-\partial_j\Gamma_{ik}^\ell%
+\Gamma_{im}^\ell\Gamma_{jk}^m-\Gamma_{jm}^\ell\Gamma_{ik}^m$. For later use, we introduce the variant
\[
\tilde{\mc N}=\frac12\Delta h + \Rm(h,*)+\dv^*\dv\,h +\frac12\cv^2 v_h
\qquad\Big(=\mc N + \frac{\bar H}{2n\tau}g\Big).
\]

It is a classical fact~\cite{Bes87} that at any compact Einstein manifold other than the standard sphere,
the space $C^\infty(S_2(T^*\mc M))$ of smooth sections of the bundle of symmetric covariant
$2$-tensors admits an orthogonal decomposition
\[
C^\infty(S_2(T^*\mc M))=\mr{im}(\dv^*) \oplus \mc C \oplus
\big(\ker(\dv)\cap\ker(\mr{tr})\big),
\]
where $\mc C$ is the space of infinitesimal conformal transformations, $C^\infty(\mc M^n)* g$.
This decomposition is also orthogonal with respect to the second variation of $\nu$ (see
Theorem~1.1 of~\cite{CH15}).

It is well known (see Example~2.3 of~\cite{CHI04} or Theorem~1.4 of~\cite{CH15}) that
$(\mb{CP}^N,g\fs)$ is neutrally variationally stable. The neutral direction is attained by
$h=\vp g\in\mc C$, where $\vp$ belongs to the first nontrivial eigenspace of the Laplacian,
\begin{equation}	\label{eigenfunction}
\Big(\Delta_{\mr{FS}}+\frac{1}{\tau}\Big)\vp=0,\qquad\icpn\vp\,\mr dV_{\mr{FS}}=0,
\end{equation}
where the subscripts indicate that the Laplacian and volume form are those of the Fubini-Study metric
$g_{\mr{FS}}$. Note that by~\eqref{soliton}, its Einstein constant is $1/(2\tau)$.

\section{Variational formulas}

In this section, we calculate the third variation of Perelman's shrinker entropy at the Fubini--Study metric.
We begin by recalling some classical first-variation formulas~\cite{Bes87}.

\begin{proposition}Let $g(s)$ be a smooth one-parameter variation of $g=g(0)$
such that ${\D1} g(s)=h$, and set $H=\tr_g h$. Then one has:
\begin{align}
\label{inverse-h}
{\D1}g^{ij}&=-h^{ij},\\
\label{Christoffel-h}
{\D1}\Gamma_{ij}^k&=\frac12\big(\cv_i h_j^k+\cv_j h_i^k-\cv^k h_{ij}\big),\\
\label{Riemann-h}
{\D1}R_{ijk}^\ell &=\frac12
\left\{\begin{array}[c]{c}
\cv_i\cv_k h_j^\ell-\cv_i\cv^\ell h_{jk}-\cv_j\cv_k h_i^\ell\\
\qquad+\cv_j\cv^\ell h_{ik}+R_{ijm}^\ell h_k^m-R_{ijk}^m h_m^\ell
\end{array}\right\},\\
\label{R-h}
{\D1}R&=-\Delta H +\dv(\dv\,h)-\lb\rc,h\rb,\\
\label{VolumeForm-h}
{\D1}\mr dV &= \frac12 H\,\mr dV.
\end{align}
\end{proposition}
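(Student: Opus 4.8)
The plan is to prove the five identities in the order listed, since each builds on its predecessors, evaluating everything at $s=0$ where $g=g(0)$. Formula \eqref{inverse-h} follows at once by differentiating $g^{ik}g_{kj}=\delta^i_j$ in $s$: the product rule gives $({\D1}g^{ik})g_{kj}+g^{ik}h_{kj}=0$, whence ${\D1}g^{ik}=-g^{ia}g^{kb}h_{ab}=-h^{ik}$. For \eqref{Christoffel-h} the essential point is that a difference of connections is a tensor, so ${\D1}\Gamma_{ij}^k$ is a genuine $(1,2)$-tensor and may be computed pointwise in geodesic normal coordinates about any $p$. There $\Gamma_{ij}^k(p)=0$ and $\pd_m g_{ij}(p)=0$, so upon differentiating $\Gamma_{ij}^k=\tfrac12 g^{k\ell}(\pd_i g_{j\ell}+\pd_j g_{i\ell}-\pd_\ell g_{ij})$ the term in which ${\D1}$ lands on $g^{k\ell}$ drops out, and at $p$ the surviving $\pd h$ agree with $\cv h$. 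This gives ${\D1}\Gamma_{ij}^k=\tfrac12(\cv_i h_j^k+\cv_j h_i^k-\cv^k h_{ij})$, which then holds in every chart by tensoriality.

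The curvature formula \eqref{Riemann-h} rests on the Palatini-type identity ${\D1}R_{ijk}^\ell=\cv_i({\D1}\Gamma_{jk}^\ell)-\cv_j({\D1}\Gamma_{ik}^\ell)$, obtained by differentiating $R_{ijk}^\ell=\pd_i\Gamma_{jk}^\ell-\pd_j\Gamma_{ik}^\ell+\cdots$ in the same normal coordinates: the quadratic $\Gamma\Gamma$ terms vanish at $p$ and $\pd\leftrightarrow\cv$ on the tensor ${\D1}\Gamma$. Substituting \eqref{Christoffel-h} and subtracting produces six second-derivative terms of $h$. Four of them are precisely $\tfrac12(\cv_i\cv_k h_j^\ell-\cv_i\cv^\ell h_{jk}-\cv_j\cv_k h_i^\ell+\cv_j\cv^\ell h_{ik})$, while the remaining pair is the commutator $\tfrac12(\cv_i\cv_j-\cv_j\cv_i)h_k^\ell$, which the Ricci identity converts into exactly the curvature terms $\tfrac12(R_{ijm}^\ell h_k^m-R_{ijk}^m h_m^\ell)$.

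Once \eqref{Riemann-h} is established, \eqref{R-h} is obtained from $R=g^{jk}R_{jk}$ by the product rule: the $g^{jk}$ variation contributes $-\lb\rc,h\rb$ through \eqref{inverse-h}, while tracing the contracted Palatini identity ${\D1}R_{jk}=\cv_\ell({\D1}\Gamma_{jk}^\ell)-\cv_j({\D1}\Gamma_{\ell k}^\ell)$ against $g^{jk}$ and substituting \eqref{Christoffel-h} yields $g^{jk}\,{\D1}R_{jk}=-\Delta H+\dv(\dv\,h)$. Lastly, \eqref{VolumeForm-h} follows from Jacobi's determinant formula ${\D1}\det g=(\det g)\,g^{ij}h_{ij}=(\det g)\,H$, so that ${\D1}\sqrt{\det g}=\tfrac12 H\sqrt{\det g}$.

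I expect the only real difficulty to be bookkeeping rather than conceptual. The delicate points are matching the commutator in \eqref{Riemann-h} to the paper's convention $R_{ijk}^\ell=\pd_i\Gamma_{jk}^\ell-\pd_j\Gamma_{ik}^\ell+\cdots$ so that the two curvature terms appear with the signs stated, and tracking the index raisings carefully enough that the divergence terms in \eqref{R-h} emerge correctly. A clean way to keep these honest is to perform each computation at the center of normal coordinates and then appeal to tensoriality to conclude the identity everywhere.
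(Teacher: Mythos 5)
Your proposal is correct, but there is no proof in the paper to compare it against step by step: the authors state this Proposition purely as a recollection of classical first-variation formulas and cite Besse's \emph{Einstein Manifolds} in lieu of any argument. Your derivation is the standard one that such a citation points to --- normal coordinates plus tensoriality of ${\D1}\Gamma$ for \eqref{Christoffel-h}, the Palatini identity and its contraction for \eqref{Riemann-h} and \eqref{R-h}, and Jacobi's determinant formula for \eqref{VolumeForm-h} --- and every step checks out against the paper's conventions. In particular, the one genuinely delicate point is handled correctly: with the convention $R_{ijk}^\ell=\partial_i\Gamma_{jk}^\ell-\partial_j\Gamma_{ik}^\ell+\Gamma_{im}^\ell\Gamma_{jk}^m-\Gamma_{jm}^\ell\Gamma_{ik}^m$, the Ricci identity for a $(1,1)$-tensor reads $(\cv_i\cv_j-\cv_j\cv_i)h_k^\ell=R_{ijm}^\ell h_k^m-R_{ijk}^m h_m^\ell$, which is exactly the curvature pair appearing in \eqref{Riemann-h}. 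Likewise in \eqref{R-h}, since ${\D1}\Gamma_{\ell k}^\ell=\tfrac12\cv_k H$, contracting the Palatini identity produces $\dv(\dv\,h)-\tfrac12\Delta H$ from the first term and $-\tfrac12\Delta H$ from the second, so the two half-Laplacians combine to the stated $-\Delta H$, with $-\lb\rc,h\rb$ coming from \eqref{inverse-h} via the product rule. What your derivation buys over the paper's bare citation is self-containedness and a verification that the signs in \eqref{Riemann-h} are consistent with the paper's specific index convention, something a reference alone cannot guarantee given how much conventions vary across the literature; what the citation buys is brevity, appropriate for a short note whose real content lies in the later lemmas.
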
 
\smallskip

\begin{notation}
For simplicity, we adopt the following conventions.
$(\mb{CP}^N,g\fs)$ denotes complex projective space with its canonical Fubini--Study metric.
Then one has $\rc[g\fs]=(2\tau)^{-1} g\fs$. We denote its real dimension
by $n=\dim_{\mb R}\mb{CP}^N=2\dim_{\mb C}\mb{CP}^N=2N$.
By compactness, we may take $g(s)$ to be the smooth family
\begin{equation}
\label{eq-variation-vp}
g(s)=g\fs+sh, \qquad s\in(-\ve,\ve),
\end{equation}
where
\[
h=\vp\,g\fs,
\]
with $\vp$ the unique solution of~\eqref{eigenfunction}. Note that in the variation~\eqref{eq-variation-vp} that we consider,
$\vp$ is the fixed function defined in~\eqref{eigenfunction}, and hence is independent of $s$.
Below, to avoid notational prolixity, we write $g$ for $g(s)$ at arbitrary $s\in(-\ve,\ve)$. Formulas should be
assumed to hold at any $s$ unless explicitly stated otherwise or decorated with $\big|_{s=0}$, in which case everything
in sight is to be evaluated at $(\mb{CP}^N,\,g(0)=g\fs)$. If $A(s)$ is any smooth one-parameter
family of tensor fields depending on $s\in(-\ve,\ve)$, we write
\[
\frac{\mr d}{\mr ds}A
\]for the derivative evaluated at arbitrary $s$, and
\[
A^\prime={\D1}A
\]
for the derivative evaluated at $s = 0$. We adopt similar notation for higher-order derivatives.
Thus, as noted above, our selected variation has the property
that $\vp^\prime=0$, \emph{i.e.}, $0=h^\prime=h^{\prime\prime}=\cdots$,
a fact that we use frequently below. Finally, having fixed $h$, we simply write
$\mc N=\mc N(h)$ and $v=v_h$ where no confusion will result.
\end{notation}

\begin{remark}
We note that to prove dynamic instability of $\mb{CP}^N$, it suffices to exhibit one unstable variation.
Our choice with $\vp^\prime=0$ matches that used by Cao--Zhu~\cite{CZ12}, whose results we use below.
Kr\"oncke makes a different choice in the proof of Proposition~9.1 of~\cite{Kro13}, taking $g(t)=\big(1+t\,v(t)\big)g\fs$, 
where $v(t)=\vp/(1+t\vp)$. Other choices (differing to second order or above) are certainly possible.
\end{remark}

\begin{remark}
Our simple choice of variation, with $\vp^\prime=0$, means that we do not have the freedom
to force equation~\eqref{eigenfunction} to hold for all $s$ in an open set. Consequently, we do
not differentiate that equation in what follows.
\end{remark}

We now establish various identities needed to compute
\[
{\D3} \nu\big[g(s)\big].
\]
Even though many of these identities are well known to experts, we derive them here to
keep this note as transparent and self-contained as possible.


\begin{lemma} If $g$ and $h=\vp\,g\fs$ are as above, then using $\delta$ to denote the
Kronecker delta function, one has:
\begin{align}
\label{inverse}
{\D1}g^{ij}&=-\vp\,g^{ij},\\
\label{Christoffel}
{\D1}\Gamma_{ij}^k&=\frac12\big(\cv_i\vp\,\delta_j^k+\cv_j\vp\,\delta_i^k-\cv^k\vp\,g_{ij}\big),\\
\label{Riemann}
{\D1}R_{ijk}^\ell&=\frac12\big(\cv_i\cv_k\vp\,\delta_j^\ell-\cv_j\cv_k\vp\,\delta_i^\ell
+\cv_j\cv^\ell\vp\,g_{ik}-\cv_i\cv^\ell\vp\,g_{jk}\big),\\
\label{R}
{\D1}R&=-(n-1)\Delta\vp-\frac{n}{2\tau}\vp,\\
\label{DdV}
{\D1}\mr dV &= \frac{n}{2}\vp\,\mr dV.
\end{align}
\end{lemma}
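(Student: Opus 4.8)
The plan is to derive each identity by specializing the general first-variation formulas \eqref{inverse-h}--\eqref{VolumeForm-h} to $h=\vp\,g\fs$, the guiding principle being that $g\fs$ is parallel, so that in every expression the covariant derivatives act only on the scalar $\vp$ and never on the metric. I would begin by recording the elementary pointwise consequences of $h_{ij}=\vp\,g_{ij}$: raising indices gives $h^{ij}=\vp\,g^{ij}$ and $h_i^k=\vp\,\delta_i^k$, while taking the trace gives $H=\tr_g h=\vp\,g^{ij}g_{ij}=n\vp$. With these in hand, \eqref{inverse} follows at once from \eqref{inverse-h}, and \eqref{DdV} follows from \eqref{VolumeForm-h} together with $H=n\vp$.

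For \eqref{Christoffel} I would substitute $h_j^k=\vp\,\delta_j^k$ and $h_{ij}=\vp\,g_{ij}$ into \eqref{Christoffel-h}; because $\cv g\fs=0$, each covariant derivative differentiates only $\vp$, yielding the three terms $\cv_i\vp\,\delta_j^k$, $\cv_j\vp\,\delta_i^k$, and $\cv^k\vp\,g_{ij}$. The same observation drives \eqref{Riemann}: inserting $h=\vp\,g\fs$ into \eqref{Riemann-h} turns the four second-derivative terms into $\cv_i\cv_k\vp\,\delta_j^\ell$, $\cv_i\cv^\ell\vp\,g_{jk}$, $\cv_j\cv_k\vp\,\delta_i^\ell$, and $\cv_j\cv^\ell\vp\,g_{ik}$. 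The one point needing care---and the only genuinely non-mechanical step---is the fate of the two algebraic curvature terms: since $h_k^m=\vp\,\delta_k^m$ and $h_m^\ell=\vp\,\delta_m^\ell$, one computes $R_{ijm}^\ell h_k^m=\vp\,R_{ijk}^\ell=R_{ijk}^m h_m^\ell$, so these two contributions cancel exactly. This cancellation is a feature of conformal variations $h=\vp\,g\fs$, and it is precisely what makes \eqref{Riemann} so clean.

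The remaining identity \eqref{R} is the only one requiring a genuine geometric input. Starting from \eqref{R-h}, I would evaluate its three terms in turn: $-\Delta H=-n\Delta\vp$ from $H=n\vp$; the divergence term from $(\dv h)_j=g^{ik}\cv_i(\vp\,g_{kj})=\cv_j\vp$, so that $\dv(\dv\,h)=\Delta\vp$; and the pairing from $\lb\rc,h\rb=\vp\,\lb\rc,g\fs\rb=\vp\,R$. At this stage I would use that all quantities are evaluated at $s=0$, that is, at $(\mb{CP}^N,g\fs)$, where $\rc=(2\tau)^{-1}g\fs$ forces the scalar curvature to be $R=n/(2\tau)$; hence $\lb\rc,h\rb=\tfrac{n}{2\tau}\vp$. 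Assembling the three pieces gives ${\D1}R=-n\Delta\vp+\Delta\vp-\tfrac{n}{2\tau}\vp=-(n-1)\Delta\vp-\tfrac{n}{2\tau}\vp$, which is \eqref{R}. I expect no substantive obstacle here: the whole argument is routine substitution, and the only places demanding vigilance are the sign conventions in \eqref{R-h} (so that the divergence term enters with a plus sign and $R$ is correctly read off from the Einstein condition), the exact trace factor $n$, and the curvature-term cancellation in \eqref{Riemann}.
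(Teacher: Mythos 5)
Your proposal is correct and follows exactly the paper's approach: the paper's own proof is the one-line statement that \eqref{inverse}--\eqref{DdV} follow from \eqref{inverse-h}--\eqref{VolumeForm-h} by direct substitution, which is precisely what you carry out. Your explicit details --- the identities $h_i^k=\vp\,\delta_i^k$, $H=n\vp$, the cancellation of the two curvature terms in \eqref{Riemann}, and the use of $\rc=(2\tau)^{-1}g\fs$ (hence $R=n/(2\tau)$) at $s=0$ to obtain \eqref{R} --- are all accurate.
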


\begin{proof}
Formulas~\eqref{inverse}--\eqref{DdV} follow from~\eqref{inverse-h}--\eqref{VolumeForm-h}
by direct substitution.
\end{proof}

\begin{lemma}	\label{SpecialVariations}
If $g$ and $h=\vp\,g_{\mr{FS}}$ are as above, then one has:
\begin{align}
\label{Dtau}
{\D1}\tau&=0,\\
\label{DV}
{\D1} V &=0,\\
\label{DbarH}
{\D1}\bar H &=\frac{n(n-2)}{2V}\|\vp\|^2,
\end{align}
where $\|\vp\|^2 = \icpn \vp^2\mr dV$.
\end{lemma}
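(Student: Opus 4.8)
The plan is to handle the three formulas in order of increasing difficulty: \eqref{DV} and \eqref{DbarH} fall directly out of the first-variation formulas of the preceding lemma, whereas \eqref{Dtau} is the only substantive assertion, since $\tau=\tau(s)$ is defined only implicitly, as the minimizing dilation parameter in $\nu[g(s)]$. For \eqref{DV}, I would differentiate $V=\icpn\mr dV$ using \eqref{DdV}, obtaining ${\D1}V=\tfrac n2\icpn\vp\,\mr dV\fs$, which vanishes by the normalization $\icpn\vp\,\mr dV\fs=0$ in \eqref{eigenfunction}. For \eqref{DbarH}, I would first record that $h=\vp\,g\fs$ is a fixed tensor ($\vp'=0$), so that $H=\tr_g h=g^{ij}h_{ij}$ has $H|_{s=0}=n\vp$ and, by \eqref{inverse}, ${\D1}H=({\D1}g^{ij})h_{ij}=-\vp\,H|_{s=0}=-n\vp^2$. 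Writing $\bar H=V^{-1}\icpn H\,\mr dV$ and using ${\D1}V=0$ to kill the derivative of the prefactor, I would combine ${\D1}H=-n\vp^2$ with \eqref{DdV} to get
\[
{\D1}\bar H=\frac1V\icpn\big({\D1}H+\tfrac n2\vp\,H|_{s=0}\big)\,\mr dV\fs=\frac1V\Big(-n+\tfrac{n^2}{2}\Big)\|\vp\|^2=\frac{n(n-2)}{2V}\|\vp\|^2.
\]

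The heart of the lemma is \eqref{Dtau}, and here the main obstacle is that $\tau(s)$ carries no closed geometric formula; it is pinned down only by first-order optimality in Perelman's minimization. My strategy is to extract the Euler--Lagrange equation for $\tau$ and differentiate \emph{that}. Holding the probability measure $\mr d\mu=(4\pi\tau)^{-n/2}e^{-f}\mr dV$ fixed (so the constraint $\int\mr d\mu=1$ is automatically preserved) and writing $f=-\log(\mr d\mu/\mr dV)-\tfrac n2\log(4\pi\tau)$, one has $\pd_\tau f=-n/(2\tau)$, a spatial constant; varying $\tau$ at the minimizer therefore yields the clean identity
\[
\icpn\big(R+|\cv f|^2\big)\,\mr d\mu=\frac n{2\tau},
\]
valid for every $s$. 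Differentiating this at $s=0$ and invoking three features of the Fubini--Study background finishes the proof: $f|_{s=0}$ is constant, so $\cv f|_{s=0}=0$ and the variation of $|\cv f|^2$ vanishes regardless of $f'$; $R|_{s=0}=n/(2\tau)$ is constant while $\int\mr d\mu\equiv1$, so the term from varying the measure equals $R|_{s=0}\,{\D1}\!\int\mr d\mu=0$; and ${\D1}R=-(n-1)\Delta\vp-\tfrac n{2\tau}\vp$ by \eqref{R} has zero average against $\mr d\mu|_{s=0}=V^{-1}\mr dV\fs$, because $\icpn\Delta\vp\,\mr dV\fs=\icpn\vp\,\mr dV\fs=0$. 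Hence the left side has vanishing $s$-derivative, whereas the right side differentiates to $-\tfrac n{2\tau^2}{\D1}\tau$; equating the two gives ${\D1}\tau=0$.

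The points requiring care, which I regard as the crux, are (i) that the minimizing pair $(\tau(s),f(s))$ exists and varies smoothly in $s$ near $s=0$, so that the $\tau$-optimality identity may legitimately be differentiated---this follows from standard nondegeneracy of the $\nu$-minimization at the Einstein background---and (ii) the verification that the unknown $f'$ genuinely drops out of the computation. The latter is exactly what the argument above exhibits: $f'$ can enter only through the measure variation, where it is annihilated by the constant $R|_{s=0}$, or through $|\cv f|^2$, where it is annihilated by $\cv f|_{s=0}=0$. As a sanity check on \eqref{Dtau}, I note that for any eigenfunction $\vp$ admitting an isometry of $g\fs$ realizing $\vp\mapsto-\vp$ one has $\Phi^\ast g(s)=g(-s)$, whence $\tau[g(-s)]=\tau[g(s)]$ by diffeomorphism invariance and $\tau$ is even in $s$; the Euler--Lagrange argument has the advantage of applying to every $\vp$ in the eigenspace. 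Once the correct $\tau$-optimality identity is isolated and the cancellation of $f'$ is confirmed, the eigenfunction normalization \eqref{eigenfunction} supplies everything else.
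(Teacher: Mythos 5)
Your proposal is correct. For \eqref{DV} and \eqref{DbarH} your computations coincide with the paper's own: both follow from \eqref{inverse}, \eqref{DdV}, and the mean-zero normalization in \eqref{eigenfunction}, and your bookkeeping $\big(-n+\tfrac{n^2}{2}\big)\|\vp\|^2$ is exactly the paper's. The genuine difference is in \eqref{Dtau}, the only nontrivial item. The paper disposes of it by citing Lemma~2.4 of \cite{CZ12}, which at an Einstein metric (where one may take $f=0$) gives the first-variation formula $\tau^\prime=\tau\icpn\lb\rc,h\rb\,\mr dV\big/\icpn R\,\mr dV$, and then observes that the numerator equals $\tfrac{n}{2\tau}\icpn\vp\,\mr dV=0$. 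You instead re-derive the needed input from scratch: parametrizing the constraint set by the probability measure $\mr d\mu$ and $\tau$, you extract the $\tau$-stationarity identity $\icpn(R+|\cv f|^2)\,\mr d\mu=\tfrac{n}{2\tau}$, differentiate it in $s$, and verify that the unknown $f^\prime$ drops out---killed by $\cv f|_{s=0}=0$ in the gradient term and by the constraint $\int\mr d\mu\equiv1$ paired against the constant $R|_{s=0}$---so that only $\icpn R^\prime\,\mr dV=0$ survives, which vanishes by \eqref{R} and \eqref{eigenfunction}. Both arguments ultimately rest on the same two facts, namely first-order optimality of $\tau$ in Perelman's minimization and $\icpn\vp\,\mr dV=0$, but yours is self-contained where the paper's is citation-based. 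The price is that you must justify smooth dependence of the minimizing pair $(f(s),\tau(s))$ near $s=0$ yourself (a point you correctly flag, and which the paper also inherits from \cite{CZ12} rather than proving); the benefit is that the mechanism by which $f^\prime$ is irrelevant becomes explicit, rather than remaining hidden inside Cao--Zhu's formula.
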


\begin{proof}
To establish~\eqref{Dtau}, we recall that by Lemma~2.4 of~\cite{CZ12}, using the fact that our metric is Einstein with
$\rc = (2\tau)^{-1} g_{FS}$ (and hence that we can take $f = 0$ in Lemma 2.4 of \cite{CZ12}), we have
\[
\tau^\prime = \tau\frac{\int_{\mc M^n}\lb\rc,h\rb\,\mr dV}{\int_{\mc M^n}R\,\mr dV}.
\]
Furthermore, at $s=0$, equation~\eqref{eigenfunction} implies that
\[
\icpn \lb\rc,h\rb\,\mr dV=\frac{1}{2\tau}\icpn H\,\mr dV=\frac{n}{2\tau}\icpn\vp\,\mr dV=0.
\]
Equation~\eqref{Dtau} follows.


Equation~\eqref{DV} follows from~\eqref{DdV} and~\eqref{eigenfunction}, because at $s=0$,
\[
V^\prime = \icpn (\mr dV)^\prime = \frac{n}{2}\icpn\vp\,\mr dV=0.
\]

Finally, equation~\eqref{DbarH} follows from~\eqref{DV} and the computation
\begin{align*}
{\D1}\icpn g^{ij}h_{ij}\,\mr dV
	&=\icpn (g^{ij})^\prime h_{ij}\,\mr dV + \icpn g^{ij}h_{ij}\,(\mr dV)^\prime\\
	&=\icpn\big(-\vp H +H\,\frac{n}{2}\vp\big)\,\mr dV\\
	&=\Big(-n+\frac{n^2}{2}\Big)\icpn\vp^2\,\mr dV.
\end{align*}
\end{proof}

We next compute derivatives of the Laplacian.

\begin{lemma}	\label{DLaplacian_lemma}
If $g$ and $h=\vp\,g_{\mr{FS}}$ are as above, then (at $s = 0$) we have
\begin{equation}	\label{DLaplacian}
\Delta^\prime=-\vp\Delta+\frac{n-2}{2}\,\cv_{\cv\vp}.
\end{equation}
\end{lemma}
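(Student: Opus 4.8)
The plan is to compute the variation directly from the coordinate expression of the scalar Laplacian, since the only metric-dependent ingredients there are easy to differentiate using the previous lemma. Acting on a function $u$, one has $\Delta u=g^{ij}\cv_i\cv_j u=g^{ij}\big(\pd_i\pd_j u-\Gamma_{ij}^k\pd_k u\big)$. The second-derivative term $\pd_i\pd_j u$ does not depend on the metric, so only $g^{ij}$ and $\Gamma_{ij}^k$ carry $s$-dependence. Differentiating in $s$ and evaluating at $s=0$ therefore gives
\[
\Delta' u=(g^{ij})'\,\cv_i\cv_j u-g^{ij}\,(\Gamma_{ij}^k)'\,\pd_k u,
\]
into which I would substitute the first-variation formulas~\eqref{inverse} and~\eqref{Christoffel}.

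First I would dispatch the inverse-metric term: by~\eqref{inverse}, $(g^{ij})'=-\vp\,g^{ij}$, so it contributes exactly $-\vp\,g^{ij}\cv_i\cv_j u=-\vp\,\Delta u$, which is the first term on the right-hand side of~\eqref{DLaplacian}. The heart of the computation is then the Christoffel term. Using~\eqref{Christoffel}, I would contract
\[
g^{ij}(\Gamma_{ij}^k)'=\tfrac12\,g^{ij}\big(\cv_i\vp\,\delta_j^k+\cv_j\vp\,\delta_i^k-\cv^k\vp\,g_{ij}\big).
\]
The first two terms each contract to $\cv^k\vp$, while the last produces $g^{ij}g_{ij}\,\cv^k\vp=n\,\cv^k\vp$. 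Hence $g^{ij}(\Gamma_{ij}^k)'=\tfrac12(2-n)\cv^k\vp$, and the Christoffel term becomes $-\tfrac12(2-n)\cv^k\vp\,\pd_k u=\tfrac{n-2}{2}\,\cv_{\cv\vp}u$, completing~\eqref{DLaplacian}.

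The hard part here is really only bookkeeping rather than analysis. One must be careful that~\eqref{DLaplacian} records the variation of the Laplacian \emph{acting on functions}, which is precisely the case needed for $\vp$ and $v$ in the sequel, and one should note that the dimensional factor $n-2$ arises entirely from the trace identity $g^{ij}g_{ij}=n$ applied to the last term of $(\Gamma_{ij}^k)'$. Once the first-variation formulas of the preceding lemma are in hand, the identity is purely algebraic, with no convergence or regularity issues to address.
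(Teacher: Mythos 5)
Your proposal is correct and follows essentially the same route as the paper: both differentiate the coordinate expression $g^{ij}\big(\partial_i\partial_j u-\Gamma_{ij}^k\partial_k u\big)$, substitute the first-variation formulas~\eqref{inverse} and~\eqref{Christoffel}, and obtain the factor $\tfrac{n-2}{2}$ from the contraction $g^{ij}(\Gamma_{ij}^k)^\prime=\tfrac12(2-n)\cv^k\vp$. The only cosmetic difference is that the paper allows $u$ to depend on $s$ (producing an extra $\Delta u^\prime$ term, which vanishes for the relevant choice $u=\vp$ since $\vp^\prime=0$), whereas you take $u$ fixed, which is the natural reading of the operator identity.
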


\begin{proof}
For any smooth (possibly $s$-dependent) function $u$, using
formulas~\eqref{inverse} and~\eqref{Christoffel} and collecting terms yields
\begin{align*}
{\D1}\big\{g^{ij}(\partial_i\partial_j u-\Gamma_{ij}^k\partial_k u)\big\}
&=\Delta u^\prime-\vp\Delta u-g^{ij}(\Gamma_{ij}^k)^\prime\cv_k u\\
&=\Delta u^\prime-\vp\Delta u+\frac{n-2}{2}\lb\cv\vp,\cv u\rb,
\end{align*}
whence the result follows. (We note that for $u = \vp$, we have $\vp' = 0$.)
\end{proof}

\begin{lemma}
\label{DDLaplacian}
If $g$ and $h = \vp g_{FS}$ are as above, then (at $s = 0$) we have
\[\Delta'' = 2\vp^2 \Delta.\]
\end{lemma}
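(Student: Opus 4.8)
The plan is to obtain \eqref{DLaplacian} as the $s=0$ restriction of an identity valid for all $s$, and then differentiate that identity one further time. Since our variation $g(s)=(1+s\vp)g\fs$ is purely conformal, the key observation is that at \emph{every} $s$ one has $\frac{\mr d}{\mr ds}g_{ij}=h_{ij}=\vp\,(g\fs)_{ij}=\rho\,g_{ij}$, where $\rho=\vp/(1+s\vp)$ is the conformal factor of the fixed tensor $h$ relative to the \emph{current} metric $g=g(s)$. Running the computation in the proof of Lemma~\ref{DLaplacian_lemma} verbatim, but now with all Christoffel symbols, gradients, and traces referred to $g(s)$, reproduces it with $\vp$ replaced by $\rho$, giving the arbitrary-$s$ formula $\frac{\mr d}{\mr ds}\Delta=-\rho\,\Delta+\frac{n-2}{2}\,\cv_{\cv\rho}$; this recovers \eqref{DLaplacian} at $s=0$ because $\rho|_{s=0}=\vp$.

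I would then differentiate this arbitrary-$s$ formula once more and evaluate at $s=0$, acting on a fixed function $u$. Applying the product rule to $-\rho\,\Delta u$ gives $-(\D1\rho)\,\Delta u-\rho\,\frac{\mr d}{\mr ds}(\Delta u)$, into whose second factor I substitute the same arbitrary-$s$ formula; differentiating $\frac{n-2}{2}\lb\cv\rho,\cv u\rb$ requires only $\frac{\mr d}{\mr ds}g^{ij}=-\rho\,g^{ij}$ together with $\frac{\mr d}{\mr ds}\cv\rho$. The two scalar inputs that drive everything are $\rho|_{s=0}=\vp$ and $\D1\rho=-\vp^2$. Notably, the coefficient of $\Delta$ falls out at once: the term $-(\D1\rho)\,\Delta u=\vp^2\,\Delta u$ and the $\Delta$-part of $-\rho\,\frac{\mr d}{\mr ds}(\Delta u)$, namely $-\vp(-\vp\,\Delta u)=\vp^2\,\Delta u$, combine to the advertised $2\vp^2\Delta$.

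The main obstacle is the careful bookkeeping of the first-order terms of the form $\vp\,\lb\cv\vp,\cv u\rb$. Several such terms are produced --- from $\D1\rho$ inside $\cv_{\cv\rho}$, from the variation of $g^{ij}$ in the inner product, and from the gradient part of the inner $\Delta'u$ --- each carrying its own numerical and dimensional ($n-2$) coefficient. The crux of the proof is to track these coefficients and signs exactly and to confirm that they assemble into the stated right-hand side; this is the single step where an error would most easily creep in, so I would verify it independently against the closed-form conformal expansion $\Delta_{g(s)}=(1+s\vp)^{-1}\Delta\fs+\frac{n-2}{2}(1+s\vp)^{-2}\,s\,\cv_{\cv\vp}$, in which all operators on the right are those of $g\fs$, by reading off twice the coefficient of $s^2$.
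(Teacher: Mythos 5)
Your route is genuinely different from the paper's: the paper expands $(\Delta u)''=(g^{ij})''\cv_i\cv_j u+2(g^{ij})'(\cv_i\cv_j u)'+g^{ij}(\cv_i\cv_j u)''$ and imports the formula for $(\Gamma_{ij}^k)''$ from Lemma~28 of \cite{AK07}, whereas you differentiate an arbitrary-$s$ conformal first-variation formula (equivalently, read off the $s^2$-coefficient of the closed-form expansion). Your setup is correct: $\rho=\vp/(1+s\vp)$, $\rho|_{s=0}=\vp$, ${\D1}\rho=-\vp^2$, and the arbitrary-$s$ formula $\frac{\mr d}{\mr ds}\Delta=-\rho\Delta+\frac{n-2}{2}\cv_{\cv\rho}$ all check out, as does your closed-form expansion. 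The problem is that you never execute the step you yourself flag as the crux, and that step does not come out the way you assert. Tracking your three first-order sources at $s=0$: the $\rho'$ term gives $\frac{n-2}{2}\lb\cv\rho',\cv u\rb=-(n-2)\vp\lb\cv\vp,\cv u\rb$; the metric variation inside the inner product gives $-\frac{n-2}{2}\vp\lb\cv\vp,\cv u\rb$; and the gradient part of $-\rho\,\frac{\mr d}{\mr ds}(\Delta u)$ gives $-\frac{n-2}{2}\vp\lb\cv\vp,\cv u\rb$. These sum to $-2(n-2)\vp\lb\cv\vp,\cv u\rb$, which is \emph{not} zero for $n=2N\geq4$. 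Your own proposed verification says the same thing at once: the coefficient of $s^2$ in $\frac{n-2}{2}\,s\,(1+s\vp)^{-2}$ is $-(n-2)\vp$, so the honest output of your method is
\[
\Delta''=2\vp^2\Delta-2(n-2)\vp\,\cv_{\cv\vp},
\]
not $2\vp^2\Delta$. (Sanity check on flat space with $\vp=u=x^1$: there $\Delta_{g(s)}u=\frac{n-2}{2}\,s\,(1+sx^1)^{-2}$, whose second $s$-derivative at $s=0$ is $-2(n-2)x^1\neq0$, while the claimed identity would give $0$.)

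So the gap is concrete: the asserted cancellation of the first-order terms fails, and carried to completion your argument disproves the identity as stated rather than proving it. You should know that the paper's own proof stumbles at exactly this spot: its final line claims $-2\vp\,\frac{n-2}{2}\lb\cv\vp,\cv u\rb+\vp(2-n)\lb\cv\vp,\cv u\rb=0$, but those two terms are equal rather than opposite, and they sum to $-2(n-2)\vp\lb\cv\vp,\cv u\rb$; the cancellation is valid only when $n=2$. The slip happens to be harmless downstream, because Lemma~\ref{lem-Lapf''} cites Lemma~\ref{DDLaplacian} only for the correct fact $(g^{-1})''=2\vp^2g^{-1}$, and computes $(\Delta f)''$ from scratch using $f|_{s=0}=0$, never invoking the operator identity $\Delta''=2\vp^2\Delta$. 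If you want a true statement to prove, prove the displayed formula above; your method, executed to the end, does exactly that.
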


\begin{proof}
If $u$ is a smooth function that is independent of $s$, then
\[
(\Delta u)^{\prime\prime}=(g^{ij})^{\prime\prime}\cv_i\cv_j u+2(g^{ij})^\prime(\cv_i\cv_j u)^\prime
+g^{ij}(\cv_i\cv_j u)^{\prime\prime}.
\]
Using the Neumann series for $(g\fs+s\vp g\fs)^{-1}$, one sees easily that $(g^{ij})^{\prime\prime}=2\vp^2g^{ij}$.
Equation~\eqref{inverse} gives $(g^{-1})^\prime=-\vp g^{-1}$, and as in the proof of Lemma~\ref{DLaplacian_lemma}, one has
\[
g^{ij}(\cv_i\cv_j u)^\prime=\frac{n-2}{2}\lb\cv\vp,\,\cv u\rb.
\]
Using the fact that $u^\prime=0$, one has
\[
(\cv_i\cv_j u)^{\prime\prime}=-(\Gamma_{ij}^k)^{\prime\prime}\cv_k u,
\]
where by part~(1) of Lemma~28 of~\cite{AK07}, we have
\begin{align*}
(\Gamma_{ij}^k)^{\prime\prime}&=-h_\ell^k(\cv_i h_j^\ell+\cv_j h_i^\ell-\cv^\ell h_{ij})\\
&=-\vp(\cv_i\vp\,\delta_j^k+\cv_j\vp\,\delta_i^k-\cv^k\vp\,g_{ij}).
\end{align*}
Putting these together, we obtain
\[
(\Delta u)^{\prime\prime}=2\vp^2\Delta u-2\vp\,\frac{n-2}{2}\lb\cv\vp,\,\cv u\rb+\vp(2-n)\lb\cv\vp,\,\cv u\rb
=2\vp^2\Delta u,
\]
which finishes the proof.
\end{proof}

\begin{lemma}	\label{integrand}
If $g$ and $h=\vp\,g_{\mr{FS}}$ are as above, then at $s=0$, $v=2\vp$,  and $\tilde{\mc N}$ vanishes pointwise.
\end{lemma}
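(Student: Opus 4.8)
The plan is to establish the two assertions in sequence: first solve for $v = v_h$ at $s=0$, and then use the resulting $v = 2\vp$ to evaluate $\tilde{\mc N}$ term by term. To find $v$, I would compute the right-hand side of \eqref{v_at_s=0}. Since $h_{\ell k} = \vp\,g\fs_{\ell k}$ and $g\fs$ is parallel, one has $\cv^\ell h_{\ell k} = \cv_k\vp$, so $\cv^k\cv^\ell h_{\ell k} = \Delta\vp$. Using the eigenfunction equation \eqref{eigenfunction} in the form $\Delta\vp = -\vp/\tau$, I would verify directly that $v = 2\vp$ solves $(\Delta + \tfrac{1}{2\tau})(2\vp) = 2\Delta\vp + \tfrac{1}{\tau}\vp = -\tfrac1\tau\vp = \Delta\vp$, and that $\icpn 2\vp\,\mr dV = 0$ by \eqref{eigenfunction}. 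To conclude $v = 2\vp$ rather than merely that $2\vp$ is \emph{a} solution, I must also check uniqueness: since the first nonzero eigenvalue of $-\Delta$ on $(\mb{CP}^N,g\fs)$ is $1/\tau$, the value $1/(2\tau)$ lies strictly below the spectrum of $-\Delta$ on mean-zero functions, so $\Delta + \tfrac{1}{2\tau}$ is invertible there and the solution satisfying the normalization is unique.

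Next I would evaluate the four terms of $\tilde{\mc N}$ from the component form \eqref{N} (dropping the $-\bar H/(2n\tau)\,g$ correction), substituting $h = \vp\,g\fs$, $v = 2\vp$, and using that $g\fs$ is parallel and Einstein with $\rc = \tfrac{1}{2\tau}g\fs$. The rough-Laplacian term gives $\tfrac12(\Delta h)_{ij} = \tfrac12(\Delta\vp)\,g_{ij} = -\tfrac{\vp}{2\tau}g_{ij}$, while the curvature term contracts as $R_{kij}^\ell g^{km}h_{m\ell} = \vp\,R_{\ell ij}^\ell = \vp\,R_{ij} = \tfrac{\vp}{2\tau}g_{ij}$; these cancel by the Einstein and eigenfunction equations. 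Likewise, the divergence term $-\tfrac12 g^{k\ell}(\cv_i\cv_\ell h_{kj} + \cv_j\cv_\ell h_{ki})$ collapses to $-\cv_i\cv_j\vp$ (again using that $g\fs$ is parallel and that Hessians of functions are symmetric), while the last term is $\tfrac12\cv_i\cv_j v = \cv_i\cv_j\vp$; these cancel precisely because $v = 2\vp$. Thus all four terms cancel in pairs and $\tilde{\mc N}$ vanishes pointwise.

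The computation is essentially routine differential-geometric bookkeeping, so the only real care required is in the curvature term: one must correctly identify the contraction $R_{kij}^\ell g^{km}h_{m\ell} = \vp\,R_{\ell ij}^\ell$ as $\vp$ times the Ricci tensor under the paper's index convention $R_{ijk}^\ell = \partial_i\Gamma_{jk}^\ell - \partial_j\Gamma_{ik}^\ell + \cdots$, and get its sign right. The conceptual content I would emphasize is that $\tilde{\mc N}$ splits naturally into a \emph{curvature/trace} pair and a \emph{Hessian} pair, each cancelling separately --- the former by the defining relations \eqref{soliton} and \eqref{eigenfunction}, the latter exactly because $v = 2\vp$. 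This pairwise cancellation is the structural reason that $h = \vp\,g\fs$ lies in the neutral direction $\tilde{\mc N} = 0$, consistent with the neutral variational stability recorded after \eqref{eigenfunction}.
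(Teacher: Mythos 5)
Your proof is correct and follows essentially the same route as the paper: verify that $2\vp$ satisfies the defining equation \eqref{v_at_s=0} via \eqref{eigenfunction}, invoke uniqueness to get $v=2\vp$, then evaluate $\tilde{\mc N}$ term by term with the same pairwise cancellation (Laplacian against curvature via the Einstein condition, divergence term against $\tfrac12\cv^2 v$). Your spectral justification of uniqueness --- that $1/(2\tau)$ lies strictly below the first nonzero eigenvalue $1/\tau$, so $\Delta+\tfrac{1}{2\tau}$ is injective on mean-zero functions --- is a welcome elaboration of a step the paper states without proof.
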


\begin{proof}
To prove the first claim, we note that $\cv^k\cv^\ell h_{\ell k}=\Delta\vp=-\vp/\tau$ by~\eqref{eigenfunction}, and
hence by~\eqref{v_at_s=0} we have
\[\left(\Delta  +\frac{1}{2\tau}\right)\, v = - \frac{\vp}{\tau}, \qquad \mbox{with} \,\,\,\, \icpn v\mr dV = 0.\]
On the other hand, by \eqref{eigenfunction}, we have
\[\left(\Delta + \frac{1}{2\tau}\right)\, 2\vp = -\frac{\vp}{\tau}, \qquad \mbox{with} \,\,\,\, \icpn 2\vp \mr dV = 0.\]
By uniqueness, we have $v = 2\vp$.

To prove the second claim, one uses the identity $R_{ij}\big|_{s=0} = (2\tau)^{-1} g_{ij}$,
without differentiating, along with~\eqref{eigenfunction} and the first claim to obtain
\begin{align*}
\tilde{\mc N}_{ij}&=\frac12\Delta\vp\,g_{ij}+\vp R_{ij}-\cv_i\cv_j\vp+\frac12\cv_i\cv_j v\\
&= \frac 12\Big(\Delta \varphi + \frac{1}{\tau} \varphi\Big)\, g_{ij} + \cv_i\cv_j\Big(\frac12v-\vp\Big)=0.
\end{align*}
\end{proof}

We now compute the third variation explicitly.

\begin{lemma}	\label{Reduction}
If $g$ and $h=\vp\,g_{\mr{FS}}$ are as above, then
 \[
{\D3} \nu\big[g(s)\big]=
-\frac{\tau}{(4\pi\tau)^{n/2}}\int_{\mc M^n}
\Big\lb h,\,
\rc^{\prime\prime}+(\cv^2f)^{\prime\prime}-\Big(\frac{1}{2\tau}g\Big)^{\prime\prime}\Big\rb\,\mr dV,
\]
where the three second derivatives on the right-hand side are given pointwise by~\eqref{eq-2nd-ric},
\eqref{eq-2nd-hess-f}, and \eqref{eq-2nd-tau}, respectively.
\end{lemma}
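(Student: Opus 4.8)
The plan is to obtain ${\D3}\nu$ by differentiating Perelman's first-variation formula twice more in $s$ and evaluating at $s=0$. Writing $E=\rc+\cv^2 f-\tfrac{1}{2\tau}g$ for the soliton defect and $\mr d\mf m=(4\pi\tau)^{-n/2}e^{-f}\,\mr dV$ for the associated unit-mass measure (so that $\int_{\mc M^n}\mr d\mf m=1$ for every $s$), I would first note that $\tfrac12 g-\tau(\rc+\cv^2 f)=-\tau E$, so that Perelman's formula becomes
\[
\frac{\mr d}{\mr ds}\nu\big[g(s)\big]=-\tau\int_{\mc M^n}\lb E,h\rb\,\mr d\mf m,
\]
valid at every $s$ with $f=f(s)$, $\tau=\tau(s)$ the minimizers. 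The whole computation is then the second $s$-derivative of the right-hand side at $s=0$, which I would organize by the Leibniz rule according to how many derivatives land on $E$.

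Two structural facts do most of the work. Because $(\mb{CP}^N,g\fs)$ is Einstein with $\rc=(2\tau)^{-1}g\fs$, its entropy minimizer can be taken to be $f\equiv0$ (as in the proof of~\eqref{Dtau}), and the soliton normalization~\eqref{soliton} then gives $E\big|_{s=0}=0$; moreover Lemma~\ref{SpecialVariations} (equation~\eqref{Dtau}) gives $\tau^\prime=0$. Expanding ${\D2}\big[-\tau\int_{\mc M^n}\lb E,h\rb\,\mr d\mf m\big]$, every term carrying a factor of $\tau^\prime$ or an undifferentiated $E$ drops out at $s=0$. What survives, a priori, is the single term containing $E^{\prime\prime}$ together with cross terms each linear in $E^\prime$ (arising when a derivative falls on the measure $\mr d\mf m$ or on the metric used to contract $\lb E,h\rb$, rather than on $E$ itself).

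The hard part will be disposing of these $E^\prime$ cross terms, which I would reduce to the pointwise assertion $E^\prime\big|_{s=0}=0$. The first step is to compute $E^\prime$ directly: since $f\equiv0$ at $s=0$ one has $(\cv^2 f)^\prime=\cv^2 f^\prime$ and $\big(\tfrac1{2\tau}g\big)^\prime=\tfrac1{2\tau}\vp\,g$ (using $\tau^\prime=0$), while contracting the Riemann variation~\eqref{Riemann} yields $\rc^\prime=\tfrac{2-n}{2}\cv^2\vp-\tfrac12(\Delta\vp)g$. Substituting $\Delta\vp=-\vp/\tau$ from~\eqref{eigenfunction}, the pure-trace pieces cancel and one is left with the pure Hessian
\[
E^\prime\big|_{s=0}=\cv^2\Big(\tfrac{2-n}{2}\vp+f^\prime\Big).
\]
It remains to identify the variation $f^\prime$ of the minimizer, which I would obtain by linearizing the Euler--Lagrange equation $\tau(2\Delta f-|\cv f|^2+R)+f-n=\nu[g]$ characterizing $f$. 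Differentiating at $s=0$ and using $\tau^\prime=0$, ${\D1}\nu=0$, together with~\eqref{R} and~\eqref{eigenfunction}, gives $\big(\Delta+\tfrac1{2\tau}\big)f^\prime=-\tfrac{n-2}{4\tau}\vp$. Since $\tfrac1{2\tau}$ lies strictly between the first two eigenvalues $0$ and $1/\tau$ of $-\Delta$ on $(\mb{CP}^N,g\fs)$, this operator is invertible, and the unique solution is $f^\prime=\tfrac{n-2}{2}\vp$. Hence $E^\prime\big|_{s=0}=0$, as needed. (As a consistency check, $\mc N(h)$ also vanishes at $s=0$ by Lemma~\ref{integrand} together with $\bar H\big|_{s=0}=0$, the latter because $H=n\vp$ has zero mean by~\eqref{eigenfunction}.)

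With $E\big|_{s=0}=E^\prime\big|_{s=0}=0$ and $\tau^\prime=0$, the only surviving contribution is the term in which both derivatives strike $E$, giving
\[
{\D3}\nu=-\tau\int_{\mc M^n}\lb E^{\prime\prime},h\rb\,\mr d\mf m\Big|_{s=0}=-\frac{\tau}{(4\pi\tau)^{n/2}}\int_{\mc M^n}\Big\lb h,\,\rc^{\prime\prime}+(\cv^2 f)^{\prime\prime}-\Big(\frac{1}{2\tau}g\Big)^{\prime\prime}\Big\rb\,\mr dV,
\]
where I used $f\equiv0$ at $s=0$ to replace $\mr d\mf m$ by $(4\pi\tau)^{-n/2}\mr dV$ and $E^{\prime\prime}=\rc^{\prime\prime}+(\cv^2 f)^{\prime\prime}-(\tfrac1{2\tau}g)^{\prime\prime}$. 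This is the claimed identity; the remaining task, carried out in~\eqref{eq-2nd-ric}, \eqref{eq-2nd-hess-f}, and~\eqref{eq-2nd-tau}, is to evaluate the three second derivatives pointwise.
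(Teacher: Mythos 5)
Your proposal is correct, and its overall skeleton matches the paper's: both expand the third derivative of the first-variation formula (Lemma~2.2 of~\cite{CZ12}) by the Leibniz rule and kill all terms at $s=0$ carrying an undifferentiated soliton defect $E=\rc+\cv^2f-\tfrac{1}{2\tau}g$ (which vanishes since $g\fs$ is Einstein with $f\big|_{s=0}=0$) or a factor of $\tau^\prime$ (which vanishes by~\eqref{Dtau}). The genuine difference lies in how the crucial first-order vanishing is established. The paper quotes Lemma~2.3 of~\cite{CZ12} to identify $E^\prime\big|_{s=0}=-\tilde{\mc N}$ and then invokes Lemma~\ref{integrand} (the computation $v=2\vp$ and the pointwise vanishing of $\tilde{\mc N}$); it later recovers $f^\prime=\tfrac{n-2}{2}\vp$ from the identity $v=-2f^\prime+H$ of~\cite{CZ12} when it needs~\eqref{eq-2nd-hess-f}. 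You instead compute $E^\prime$ bare-hands --- contracting~\eqref{Riemann} for $\rc^\prime$, using $f\big|_{s=0}=0$ to get $(\cv^2f)^\prime=\cv^2f^\prime$ --- and determine $f^\prime=\tfrac{n-2}{2}\vp$ by linearizing the Euler--Lagrange equation for the minimizer, with uniqueness coming from the spectral gap (since $\tfrac{1}{2\tau}$ lies strictly between the eigenvalues $0$ and $1/\tau$ of $-\Delta$, the operator $\Delta+\tfrac{1}{2\tau}$ is invertible). This buys self-containedness: you bypass both Cao--Zhu's Lemma~2.3 and the $v$-machinery of~\eqref{v_at_s=0}, and your linearization trick is the same one the paper uses only later, in Lemma~\ref{lem-Lapf''}. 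What the paper's route buys is the structural link to the second-variation operator $\tilde{\mc N}$ (hence to the known neutral stability of $\mb{CP}^N$), and reuse of machinery it needs anyway. One caveat: the lemma as stated also asserts the pointwise formulas~\eqref{eq-2nd-ric}, \eqref{eq-2nd-hess-f}, and~\eqref{eq-2nd-tau}, which in the paper are derived inside this proof (via part~(3) of Lemma~28 of~\cite{AK07} for the Ricci term, and via~\eqref{Christoffel} and $f^\prime=\tfrac{n-2}{2}\vp$ for the Hessian term); your proposal defers these evaluations, though it does supply the key ingredient $f^\prime$ needed for~\eqref{eq-2nd-hess-f}, so completing your argument to a full proof of the statement is routine.
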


\begin{proof}
By Lemma~2.2 of~\cite{CZ12}, at arbitrary $s\in(-\ve,\ve)$, one has
\[
\begin{split}
\frac{\mr d}{\mr ds}\nu(s) &=-\big\{\tau(4\pi\tau)^{-n/2}\big\}\icpn
\big\langle h, \,\rc+\cv^2  f-\frac{1}{2\tau}g \big\rangle\,e^{-f}\mr dV \\
&= - \big\{\tau(4\pi\tau)^{-n/2}\big\}\icpn
\big(g^{ip}g^{jq}h_{pq}\big) \Big(R_{ij} + \cv_i\cv_j f - \frac{1}{2\tau} g_{ij}\big)\, \big(e^{-f} \mr dV\big),
\end{split}
\]
which we write schematically as $A\icpn B* C* D$.
A simple calculus exercise shows that
\begin{align*}
\nu^{\prime\prime\prime}(s)&=
A^{\prime\prime}\icpn B*C*D+A\icpn B^{\prime\prime}*C*D
+A\icpn B*C^{\prime\prime}*D+A\icpn B*C*D^{\prime\prime}\\
&\quad+2 A^\prime\icpn B^\prime* C* D+2 A^\prime\icpn B*C^\prime* D+2A^\prime\icpn B*C*D^\prime\\
&\quad+2A\icpn B^\prime*C^\prime* D
+2A\icpn B^\prime*C*D^\prime+2A\icpn B*C^\prime*D^\prime.
\end{align*}
Because $(\mb{CP}^N,\,g\fs)$ is Einstein, we have $C\big|_{s=0}=0$.
By~\eqref{Dtau}, $A^\prime=0$. 
Thus the formula above reduces to
\[
\nu^{\prime\prime\prime}=A\icpn B*C^{\prime\prime}*D
+2A\icpn B^\prime*C^\prime*D + 2A\icpn B*C^\prime*D^\prime.
\]
Using Lemma~2.3 of~\cite{CZ12}, and~\eqref{Dtau}, we obtain
\begin{align*}
C^\prime&=(\rc+\cv^2 f)^\prime-\frac{1}{2\tau}h\\
&=-\frac12\Delta h -\Rm(h,\cdot)-\dv^*\dv\,h-\cv^2v\\
&=-\tilde{\mc N}|_{s=0}.
\end{align*}
By Lemma~\ref{integrand}, we have $C^\prime=0$, and hence
\begin{equation}
\label{eq-3rd-var}
\nu^{\prime\prime\prime}(s)=A\icpn B* C^{\prime\prime}* D.
\end{equation}

There are three terms in $C^{\prime\prime}$. To compute the first, we apply part~(3) of Lemma~28 of~\cite{AK07}
to our conformal variation, obtaining
\begin{equation}
\label{eq-2nd-ric}
R^{\prime\prime}_{ij}=\Big(\vp\Delta\vp-\frac{n-2}{2}|\cv\vp|^2\Big)g_{ij}
+(n-2)\Big(\vp\cv_i\cv_j\vp+\cv_i\vp\cv_j\vp\Big),
\end{equation}
where the right-hand side is computed with respect to the metric data of $g\fs$.

To compute the second term, we begin by observing that
\[
(\cv_i\cv_j f)^{\prime\prime}=\cv_i\cv_j f^{\prime\prime}-2(\Gamma_{ij}^k)^\prime\cv_k f^\prime
-(\Gamma_{ij}^k)^{\prime\prime}\cv_k f,
\]
As noted above, the function $v(s)$ defined by
\[
v:=-2f^\prime+H-2\frac{\tau^\prime}{\tau}(f-\nu)
\]
in the discussion on page~9 of~\cite{CZ12} is given by~\eqref{v_at_s=0} at $s=0$.
Moreover, by~\eqref{Dtau},
we have
\begin{equation}	\label{v_identity}
v\big|_{s=0}= -2f' + H.
\end{equation}
Then equation~\eqref{v_identity} and Lemma \ref{integrand} imply that 
\[
f^\prime = \frac{n-2}{2}\vp.
\]
Now the fact that $g\fs$ is Einstein implies that $f\big|_{s=0}=0$, and so by \eqref{Christoffel} we get
\begin{equation}	\label{eq-2nd-hess-f}
\begin{split}
(\nabla_i\nabla_j f)''|_{s=0} &= \nabla_i\nabla_j f'' + \langle\vp, \nabla f'\rangle g_{ij} - \nabla_i\vp\nabla_j f' - \nabla_j\vp\nabla_i f' \\
& =\cv_i\cv_j f^{\prime\prime}+\frac{n-2}{2}|\cv\vp|^2g_{ij}
-(n-2)\cv_i\vp\cv_j\vp.
\end{split}
\end{equation}

Similarly, using~\eqref{Dtau} and the fact that $g^{\prime\prime}=0$, we compute that the final term is
\begin{equation}
\label{eq-2nd-tau}
\Big(\frac{1}{2\tau}g\Big)^{\prime\prime}=-\frac{\tau^{\prime\prime}}{2\tau^2}g.
\end{equation}
The result follows, because $D\big|_{s=0}=\big(e^{-f} \mr dV\big)\big|_{s=0}=\mr dV$.
\end{proof}

To calculate $\nu^{\prime\prime\prime}$, we will use the following identity.

\begin{lemma}
\label{lem-Lapf''}
If $g$ and $h=\vp\,g_{\mr{FS}}$ are as above, then
\[
\left(\Delta  +\frac{1}{2\tau}\right)f^{\prime\prime}=
-n\vp\Delta\vp-\frac{n}{2\tau}\vp^2-\frac{n\tau^{\prime\prime}}{4\tau^2}.
\]
\end{lemma}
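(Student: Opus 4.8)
The plan is to obtain the PDE satisfied by $f''$ by differentiating the Euler--Lagrange equation for the entropy minimizer. For each $s$ in the family, the minimizing pair $(f,\tau)=(f(s),\tau(s))$ of $\mc W$ satisfies the pointwise equation
\[
\tau\big(2\Delta f-|\nabla f|^2+R\big)+f-n=\nu,
\]
where $\Delta$, $|\nabla\cdot|^2$, $R$, and the scalar $\nu$ all depend on $s$; I will assume the smooth dependence of $s\mapsto(f(s),\tau(s))$ that underlies the Cao--Zhu setup. Writing $\Phi:=2\Delta f-|\nabla f|^2+R$, I would differentiate $\tau\Phi+f-n=\nu$ twice in $s$ and evaluate at $s=0$. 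Because $\tau'=0$ by~\eqref{Dtau}, all first-order cross terms vanish, leaving the pointwise identity $\tau''\,\Phi|_{s=0}+\tau\,\Phi''|_{s=0}+f''=\nu''$. Since $g\fs$ is Einstein with $f|_{s=0}=0$ and $\nabla f|_{s=0}=0$, one has $\Phi|_{s=0}=R|_{s=0}=n/(2\tau)$, so the constant term is $\tfrac{n\tau''}{2\tau}$; it then remains to compute the function $\Phi''|_{s=0}$ and to pin down the scalar $\nu''$.

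The heart of the computation is $\Phi''|_{s=0}=2(\Delta f)''-(|\nabla f|^2)''+R''$. The first piece expands as $2(\Delta f)''=4\,\Delta'f'+2\Delta f''$, the term $\Delta''f$ vanishing because $f|_{s=0}=0$; here I insert $f'=\tfrac{n-2}{2}\vp$, established above, together with~\eqref{DLaplacian} for $\Delta'$. The second piece is purely first order in $f$: since $\nabla f|_{s=0}=0$, only the term quadratic in $f'$ survives, giving $(|\nabla f|^2)''=2|\nabla f'|^2=\tfrac{(n-2)^2}{2}|\nabla\vp|^2$. The genuinely delicate piece is $R''=(g^{ij}R_{ij})''$. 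I would expand it by the product rule as $(g^{ij})''R_{ij}+2(g^{ij})'R_{ij}'+g^{ij}R_{ij}''$, using $(g^{ij})'=-\vp g^{ij}$ and $(g^{ij})''=2\vp^2 g^{ij}$ from the proof of Lemma~\ref{DDLaplacian}, the first variation of Ricci $R_{ij}'=-\tfrac{n-2}{2}\nabla_i\nabla_j\vp-\tfrac12(\Delta\vp)g_{ij}$ obtained by tracing~\eqref{Riemann}, and the trace of the second variation~\eqref{eq-2nd-ric} for $g^{ij}R_{ij}''$. This is the step I expect to be the main obstacle: it demands careful bookkeeping of the $\vp\Delta\vp$, $|\nabla\vp|^2$, and $\vp^2$ contributions, and the decisive feature is a clean cancellation — the $|\nabla\vp|^2$ terms from the three pieces of $\Phi''$ must sum to zero, leaving $\Phi''|_{s=0}=2\Delta f''+2n\,\vp\Delta\vp+\tfrac{n}{\tau}\vp^2$.

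Finally, I would substitute this into $\tau''\Phi|_{s=0}+\tau\Phi''|_{s=0}+f''=\nu''$ and divide by $2\tau$ to isolate $(\Delta+\tfrac{1}{2\tau})f''$. The only remaining input is $\nu''|_{s=0}$, which vanishes because $(\mb{CP}^N,g\fs)$ is neutrally variationally stable in precisely this direction: by Lemma~\ref{integrand}, $\tilde{\mc N}|_{s=0}=0$, while $\bar H|_{s=0}=0$ since $\icpn\vp\,\mr dV=0$, so $\mc N(h)|_{s=0}=0$ and~\eqref{SecondVariation} forces $\nu''=0$. With $\nu''=0$ the constant $\tfrac{\nu''}{2\tau}$ drops out, and one arrives at the stated identity $(\Delta+\tfrac{1}{2\tau})f''=-n\vp\Delta\vp-\tfrac{n}{2\tau}\vp^2-\tfrac{n\tau''}{4\tau^2}$. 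A reassuring sanity check is that the analogous first-order computation returns $\nu'|_{s=0}=0$, confirming that $g\fs$ is indeed a critical point of $\nu$.
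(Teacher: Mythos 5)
Your proposal is correct and takes essentially the same route as the paper's proof: both differentiate the minimizer's Euler--Lagrange equation twice in $s$ at $s=0$, use $\tau'=0$, $f\big|_{s=0}=0$, $f'=\tfrac{n-2}{2}\vp$, and $\nu''=0$, and then assemble $(\Delta f)''$, $(|\nabla f|^2)''$, and $R''$ exactly as you describe, with your intermediate identity $\Phi''=2\Delta f''+2n\vp\Delta\vp+\tfrac{n}{\tau}\vp^2$ and the cancellation of the $|\cv\vp|^2$ terms matching the paper's computation. The only cosmetic differences are that you justify $\nu''=0$ via~\eqref{SecondVariation} together with Lemma~\ref{integrand} and $\bar H\big|_{s=0}=0$, whereas the paper cites the vanishing of the middle term in the proof of Lemma~\ref{Reduction}, and that you organize $(\Delta f)''$ through the operator derivatives $\Delta'$ and $\Delta''$ of Lemmas~\ref{DLaplacian_lemma} and~\ref{DDLaplacian} rather than expanding $(g^{ij}\cv_i\cv_j f)''$ term by term.
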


\begin{proof}
Recall that since $f=f(s)$ is the minimizer of $\nu=\nu(s)$ for every $s$, it satisfies the following elliptic equation:
\[\tau(-2\Delta f + |\nabla f|^2 - R) - f + \nu = 0.\]
At $s = 0$, we have $\tau^\prime=0$ by~\eqref{Dtau} and $\nu^{\prime\prime}= 0$ by our choice of variation.
(Indeed, at $s=0$, we have $\nu^{\prime\prime} = A\icpn B*C^\prime*D=0$, as seen in the proof of Lemma~\ref{Reduction}.)
Because $f\big|_{s=0} = 0$, it follows that at $s=0$,
\begin{equation}	\label{eq-ident-min}
2\tau (\Delta f)^{\prime\prime} + f^{\prime\prime} = \tau \Big\{(\big|\nabla f|^2\big)^{\prime\prime} - R^{\prime\prime}\Big\}
- R\tau^{\prime\prime}.
\end{equation}
Again using $f\big|_{s=0} = 0$, $(g^{-1})^{\prime\prime}=2\vp^2 g^{-1}$ (see Lemma~\ref{DDLaplacian}),
and $f' = \big((n-2)/2\big)\vp$ (see Lemma~\ref{Reduction}), we get
\begin{align}	\label{eq-nabla-f''}
(|\cv f|^2)^{\prime\prime}&=2\vp^2|\cv f|^2+4\lb\cv f,\cv f^\prime\rb+2|\cv f^\prime|^2 \notag\\
&= \frac{(n-2)^2}{2} |\cv\vp|^2.
\end{align}
Again using $f\big|_{s=0}=0$ and $(\cv_i\cv_j f)^\prime=\cv_i\cv_j f^\prime=\big((n-2)/2\big)\cv_i\cv_j\vp$ along 
with~\eqref{eq-2nd-hess-f}, we compute that
\begin{align}
(\Delta f)^{\prime\prime}&
=(g^{ij})^{\prime\prime}\cv_i\cv_j f+ 2(g^{ij})^\prime(\cv_i\cv_j f)^\prime + g^{ij}(\cv_i\cv_j f)^{\prime\prime}\notag\\
&=\Delta f^{\prime\prime}-(n-2)\vp\Delta\vp+\frac{(n-2)^2}{2}|\cv\vp|^2.	\label{Delta_f_prime}
\end{align}
Finally, contracting ~\eqref{Riemann} yields
\[
R_{jk}^\prime=-\frac{n-2}{2}\cv_j\cv_k\vp-\frac12\Delta\vp\,g_{jk}.
\]
Using this with the identities $(g^{-1})^\prime=2\vp^2g^{-1}$, $R=g^{ij}R_{ij}$ and $R=n/(2\tau)$,
along with equations~\eqref{inverse} and~\eqref{eq-2nd-ric}, we compute that
\begin{align*}
R^{\prime\prime}&=(g^{ij})^{\prime\prime}R_{ij}+2(g^{ij})^\prime R_{ij}^\prime+g^{ij}R_{ij}^{\prime\prime}\\
&=2\vp^2R+2\vp\big\{\frac{n-2}{2}\Delta\vp+\frac{n}{2}\Delta\vp\big\}
+n\big(\vp\Delta\vp-\frac{n-2}{2}|\cv\vp|^2\big)+(n-2)\big(\vp\Delta\vp+|\cv\vp|^2\big)\\
&=4(n-1)\vp\Delta\vp-\frac{(n-2)^2}{2}|\cv\vp|^2+\frac{n}{\tau}\vp^2.
\end{align*}
Now combining this identify with equations~\eqref{eq-ident-min}, \eqref{eq-nabla-f''}, and~\eqref{Delta_f_prime}, we obtain
\begin{align*}
2\tau\Delta f^{\prime\prime}+f^{\prime\prime}
&=\tau\big\{2(n-2)\vp\Delta\vp-(n-2)^2|\cv\vp|^2\big\}\\
&\qquad+\tau\big\{-4(n-1)\vp\Delta\vp+(n-2)^2|\cv\vp|^2-\frac{n}{\tau}\vp^2\big\}-R\tau^{\prime\prime}\\
&=-2n\tau\vp\Delta\vp-n\vp^2-\frac{n\tau^{\prime\prime}}{2\tau},
\end{align*}
which completes the proof.
\end{proof}

\section{Proof of the main result}

\begin{theorem}	\label{AlmostMain}
If $g$ and $h=\vp\,g_{\mr{FS}}$ are as above, then
\[
{\D3}\nu\big[g(s)\big]=\frac{n-2}{(4\pi\tau)^{n/2}}\icpn\vp^3\,\mr dV.
\]
\end{theorem}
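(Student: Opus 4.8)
The plan is to start from the reduction formula in Lemma~\ref{Reduction} and exploit the conformal structure $h=\vp\,g\fs$ to collapse the inner product into a trace. Since $h^{ij}=\vp\,g^{ij}$ at $s=0$, for any covariant $2$-tensor $T$ one has $\lb h,T\rb=\vp\,\tr_g T$. I would therefore compute the $g\fs$-traces of the three second-derivative tensors supplied by~\eqref{eq-2nd-ric}, \eqref{eq-2nd-hess-f}, and~\eqref{eq-2nd-tau}, then multiply by $\vp$ and integrate.

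Taking traces (recalling $g^{ij}\cv_i\cv_j\vp=\Delta\vp$ and $g^{ij}\cv_i\vp\cv_j\vp=|\cv\vp|^2$), I expect
\[
\tr_g\rc^{\prime\prime}=2(n-1)\vp\Delta\vp-\tfrac{(n-2)^2}{2}|\cv\vp|^2,\qquad
\tr_g(\cv^2 f)^{\prime\prime}=\Delta f^{\prime\prime}+\tfrac{(n-2)^2}{2}|\cv\vp|^2,
\]
together with $\tr_g\big(\tfrac{1}{2\tau}g\big)^{\prime\prime}=-\tfrac{n\tau^{\prime\prime}}{2\tau^2}$. The first pleasant cancellation is that the $|\cv\vp|^2$ contributions from $\rc^{\prime\prime}$ and $(\cv^2 f)^{\prime\prime}$ are equal and opposite, so the bracketed integrand collapses to $\vp\big(2(n-1)\vp\Delta\vp+\Delta f^{\prime\prime}+\tfrac{n\tau^{\prime\prime}}{2\tau^2}\big)$.

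The $\tau^{\prime\prime}$ term is killed immediately because $\icpn\vp\,\mr dV=0$ by~\eqref{eigenfunction}, and the term $2(n-1)\icpn\vp^2\Delta\vp\,\mr dV$ reduces at once to $-\tfrac{2(n-1)}{\tau}\icpn\vp^3\,\mr dV$ via the eigenvalue identity $\Delta\vp=-\vp/\tau$. The main obstacle---and the only step requiring genuine work---is the term $\icpn\vp\,\Delta f^{\prime\prime}\,\mr dV$, which involves the \emph{a priori} unknown second variation $f^{\prime\prime}$ of the entropy minimizer. Here I would combine two linear relations: integrating by parts and using $\Delta\vp=-\vp/\tau$ gives $\icpn\vp\,\Delta f^{\prime\prime}\,\mr dV=-\tfrac{1}{\tau}\icpn\vp f^{\prime\prime}\,\mr dV$, while pairing the elliptic identity of Lemma~\ref{lem-Lapf''} against $\vp$ and again invoking $\icpn\vp\,\mr dV=0$ and $\Delta\vp=-\vp/\tau$ yields a second relation among $\icpn\vp f^{\prime\prime}\,\mr dV$ and $\icpn\vp^3\,\mr dV$. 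Solving this two-equation system eliminates $f^{\prime\prime}$ entirely, producing $\icpn\vp f^{\prime\prime}\,\mr dV=-n\icpn\vp^3\,\mr dV$ and hence $\icpn\vp\,\Delta f^{\prime\prime}\,\mr dV=\tfrac{n}{\tau}\icpn\vp^3\,\mr dV$.

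Assembling the pieces, the bracketed integrand contributes $\big(\tfrac{n}{\tau}-\tfrac{2(n-1)}{\tau}\big)\icpn\vp^3\,\mr dV=\tfrac{2-n}{\tau}\icpn\vp^3\,\mr dV$. Multiplying by the prefactor $-\tau/(4\pi\tau)^{n/2}$ from Lemma~\ref{Reduction} flips the sign and cancels the $\tau$, delivering the claimed value $\tfrac{n-2}{(4\pi\tau)^{n/2}}\icpn\vp^3\,\mr dV$. I expect no subtlety beyond careful bookkeeping, since every nonpolynomial quantity---namely $f^{\prime\prime}$, $|\cv\vp|^2$, and $\tau^{\prime\prime}$---is removed by the cancellations and integral identities described above.
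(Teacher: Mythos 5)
Your proposal is correct and follows essentially the same route as the paper: the same reduction via Lemma~\ref{Reduction}, the same cancellation of the $|\cv\vp|^2$ terms, and the same elimination of $f^{\prime\prime}$ by pairing Lemma~\ref{lem-Lapf''} against $\vp$ together with self-adjointness of $\Delta$ and the eigenvalue equation~\eqref{eigenfunction}. The only cosmetic difference is that you solve first for $\icpn\vp f^{\prime\prime}\,\mr dV$ and then convert, whereas the paper writes $\Delta=(\Delta+\tfrac{1}{2\tau})-\tfrac{1}{2\tau}$ and solves a self-referential equation for $\icpn\vp\Delta f^{\prime\prime}\,\mr dV$ directly; these are algebraically identical.
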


\begin{proof}
Using Lemma~\ref{Reduction}, equations~\eqref{eq-2nd-ric}, \eqref{eq-2nd-hess-f}, and \eqref{eq-2nd-tau},
and the fact that $h=\vp g$, we compute at $s=0$, where $e^{-f}=1$, that
\begin{align*}
\frac{\mr d^3}{\mr ds^3}\Big|_{s=0}\nu\big[g(s)\big]&=
-\frac{\tau}{(4\pi\tau)^{n/2}}
\icpn\Big\{n\vp\big(\vp\Delta\vp-\frac{n-2}{2}|\cv\vp|^2\big)+(n-2)\big(\vp^2\Delta\vp+|\cv\vp|^2\big)\Big\}\,\mr dV\\
&\quad-\frac{\tau}{(4\pi\tau)^{n/2}}\icpn\big(\vp\Delta f^{\prime\prime}+\frac{n(n-2)}{2}\vp|\cv\vp|^2-(n-2)\vp|\cv\vp|^2\big)\,\mr dV\\
&\quad-\frac{\tau}{(4\pi\tau)^{n/2}}\Big(-\frac{n\tau^{\prime\prime}}{2\tau^2}\Big)\icpn \vp\,\mr dV\\
&=-\frac{\tau}{(4\pi\tau)^{n/2}}\Big\{2(n-1)\icpn\vp^2\Delta\vp\,\mr dV+\icpn \vp\Delta f^{\prime\prime}\,\mr dV\Big\},
\end{align*}
where we used the consequence of~\eqref{eigenfunction} that $\icpn\vp\,\mr dV=0$.
To calculate the second integral in the last line above, we use equation~\eqref{eigenfunction} and
Lemma~\ref{lem-Lapf''} to see that
\begin{align*}
\icpn\vp\Delta f^{\prime\prime}\,\mr dV
&= \icpn\vp\Big(\Delta+\frac{1}{2\tau}-\frac{1}{2\tau}\Big)f''\,\mr dV\\
&=-n\icpn\vp^2\Delta\vp\,\mr dV-\frac{n}{2\tau}\icpn\vp^3\,\mr dV-\frac{n\tau^{\prime\prime}}{4\tau^2}\icpn\vp\,\mr dV
-\frac{1}{2\tau}\icpn\vp f^{\prime\prime}\,\mr dV\\
&=-n\icpn\vp^2\Delta\vp\,\mr dV-\frac{n}{2\tau}\icpn\vp^3\,\mr dV+\frac12\icpn\Delta\vp f^{\prime\prime}\,\mr dV\\
&=-n\icpn\vp^2\Delta\vp\,\mr dV-\frac{n}{2\tau}\icpn\vp^3\,\mr dV+\frac12\icpn\vp\Delta f^{\prime\prime}\,\mr dV,
\end{align*}
where in the last step, we used the fact that the Laplacian is self-adjoint. Thus by using~\eqref{eigenfunction}
to evaluate $\Delta\vp$, we conclude that
\begin{align*}
\frac{\mr d}{\mr ds}\Big|_{s=0}\nu\big[g(s)\big]&=\frac{\tau}{(4\pi\tau)^{n/2}}
\Big\{2\icpn\vp^2\Delta\vp\,\mr dV+\frac{n}{\tau}\icpn\vp^3\,\mr dV\Big\}\\
&=\frac{n-2}{(4\pi\tau)^{n/2}}\icpn\vp^3\,\mr dV.
\end{align*}
\end{proof}
\medskip

Our Main Theorem now follows from Theorem~\ref{AlmostMain} and the following observation:
\begin{lemma}
If $N>1$, there is a solution $\vp$  of~\eqref{eigenfunction} such that
\[
\icpn\vp^3\,\mr dV\neq0.
\]
\end{lemma}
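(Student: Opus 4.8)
The plan is to exhibit an explicit first eigenfunction and thereby reduce $\icpn\vp^3\,\mr dV$ to an elementary computation on the sphere. First I would realize $\mb{CP}^N$ as the quotient of the unit sphere $S^{2N+1}\subset\mb C^{N+1}$ by the diagonal $U(1)$-action, so that functions on $\mb{CP}^N$ lift to $U(1)$-invariant functions on $S^{2N+1}$, and integration against $\mr dV\fs$ agrees, up to a fixed positive constant, with integration against the round probability measure $\mr d\sigma$. It is classical that the first nontrivial eigenspace of $\Delta\fs$ --- the one appearing in~\eqref{eigenfunction} with eigenvalue $1/\tau$ --- is spanned by the functions $\vp_A=\lb Az,z\rb$ (restricted to $|z|=1$) as $A$ ranges over trace-free Hermitian matrices. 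The Hermitian condition makes $\vp_A$ real-valued, and the trace-free condition is exactly what enforces the mean-zero constraint $\icpn\vp_A\,\mr dV=0$ in~\eqref{eigenfunction}, since $\int_{S^{2N+1}}z_j\bar z_k\,\mr d\sigma=(N+1)^{-1}\delta_{jk}$.

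Having fixed this, I would specialize to a \emph{diagonal} trace-free matrix $A=\mr{diag}(a_0,\dots,a_N)$ with $\sum_j a_j=0$, so that $\vp_A=\sum_j a_j|z_j|^2$ on the sphere; exhibiting one such eigenfunction suffices. The key input is the standard moment formula
\[
\int_{S^{2N+1}}|z_0|^{2m_0}\cdots|z_N|^{2m_N}\,\mr d\sigma=\frac{N!\,\prod_j m_j!}{\big(N+\sum_j m_j\big)!},
\]
which follows from elementary Gaussian/Beta-integral manipulations. Expanding $\vp_A^3=\sum_{j,k,\ell}a_ja_ka_\ell\,|z_j|^2|z_k|^2|z_\ell|^2$ and grouping the terms according to how many of the indices $j,k,\ell$ coincide, the three moment values $6,2,1$ (times $(N+3)!^{-1}N!$) appear; after using $\sum_j a_j=0$ to collapse the mixed sums, the result is proportional, with a positive constant depending only on $N$, to the power sum $\sum_j a_j^3$.

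It then remains only to choose real $a_j$ with $\sum_j a_j=0$ but $\sum_j a_j^3\neq0$. Since $N\geq2$ supplies at least three homogeneous coordinates, the choice $a=(1,1,-2,0,\dots,0)$ works, giving $\sum_j a_j^3=-6\neq0$; this is precisely where the hypothesis $N>1$ enters, since for $N=1$ the two coordinates force $a_1=-a_0$ and hence $\sum_j a_j^3=0$, in agreement with the stability of $\mb{CP}^1=S^2$. The step I expect to be the main obstacle is not this algebra but the identification of the first eigenspace with the trace-free Hermitian forms together with the verification that its eigenvalue is exactly the $1/\tau$ required by~\eqref{eigenfunction}; this forces one to invoke (or reprove) the classical spectral description of $\mb{CP}^N$. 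Once the correct measure is in hand, evaluating $\icpn\vp^3\,\mr dV$ is a routine special-functions exercise.
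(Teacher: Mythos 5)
Your proof is correct, and it rests on the same two pillars as the paper's: the classical identification (via Berger--Gauduchon--Mazet, which the paper cites for exactly this purpose) of the first eigenspace of $(\mb{CP}^N,g\fs)$ with restrictions of harmonic $(1,1)$-bihomogeneous polynomials $\lb Az,z\rb$, $A$ trace-free Hermitian, together with the lift of the integral through the Hopf fibration to the sphere. Where you genuinely diverge is in the choice of eigenfunction and the computational engine. The paper takes the off-diagonal choice $f=\sum(z_j\bar z_k+\bar z_jz_k)$ summed cyclically over three coordinates, and computes $\int f^3$ with no moment formula at all: the isometries $z_j\mapsto -z_j$ and $z_j\mapsto iz_j$ kill every term of the expansion except $2|z_1|^2|z_2|^2|z_3|^2$, whose integral is manifestly positive, and the descent to $\mb{CP}^N$ is routed through the harmonic decomposition $f^3=F_3+r^2F_2+r^4F_1+r^6F_0$ with constant part $F_0>0$. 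You instead take diagonal $A=\mathrm{diag}(a_0,\dots,a_N)$ with $\sum_j a_j=0$ and invoke the Gaussian/Beta moment formula; your algebra checks out (using $\sum_{j\neq k}a_j^2a_k=-\sum_j a_j^3$ and $\sum_{\mathrm{distinct}}a_ja_ka_\ell=2\sum_j a_j^3$, the three coincidence patterns contribute $6-6+2=2$, giving $\int\vp_A^3\,\mr d\sigma=\tfrac{2\,N!}{(N+3)!}\sum_j a_j^3$), and the choice $(1,1,-2,0,\dots,0)$ indeed uses $N\geq2$ exactly where it must. What your route buys: since $\int\vp_A^3$ is invariant under unitary conjugation of $A$, your formula evaluates the cubic integral for \emph{every} first eigenfunction as a positive multiple of the power sum of the eigenvalues of $A$, which simultaneously supplies the required example and explains structurally why the lemma must fail for $N=1$ (a trace-free $2\times2$ Hermitian matrix has eigenvalues $\pm\lambda$, so the power sum vanishes identically). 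What the paper's route buys: a shorter computation requiring only symmetry arguments and a single obviously positive integral. The one step you flag as needing an external citation --- that this eigenspace has eigenvalue exactly $1/\tau$, twice the Einstein constant --- is used equally, and likewise cited rather than reproved, in the paper.
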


\begin{proof}
Our approach here is essentially the same as in~\cite{Kro13} and uses results from Part III-C
of~\cite{BGM71}. We include it here merely for completeness.
We denote by $\mc P_k$ the vector space of polynomials on $\mb C^{N+1}$ such that
$f(cz)=c^k\bar c^kf(z)$ and by $\mc H_k\subset\mc P_k$ the subspace of harmonic polynomials.
Then the eigenfunctions of the $k^{\mr{th}}$ eigenvalue of the Laplacian on $(\mb{CP}^N,g_{\mr{FS}})$
are the restrictions of functions in $\mc H_k$, and one has a decomposition
\begin{equation}	\label{DirectSum}
\mc P_k=\mc H_k\oplus r^2\mc P_{k-1}\quad\mbox{ for all integers }\quad k\geq1.
\end{equation}
It follows that the dimension of the first nontrivial eigenspace of the the Laplacian on $(\mb{CP}^N,g_{\mr{FS}})$
is $(N+1)^2-1=N(N+2)$.

Using the hypothesis $N>1$, one can define real-valued functions
\[
f_1(z)=z_1\bar z_2+\bar z_1 z_2,\quad
f_2(z)=z_2\bar z_3+\bar z_2 z_3,\quad
f_3(z)=z_3\bar z_1+\bar z_3 z_1.
\]
We choose $f=f_1+f_2+f_3$ and denote by $\vp$ its restriction to $\mb{CP}^N$. We will show that
this choice (not unique in general) has the desired properties. By considering isometries $z_j\mapsto-z_j$,
it is easy to verify that $\int_{\mc S^{2N-1}}f\,\mr dV=0$. One has
\[
f^3=\sum_{j=1}^3 f_j^3 + 3\sum_{j=1}^3\sum_{k\neq j} f_j f_k^2 + 6f_1f_2f_3.
\]
Consideration of the same isometries $z_j\mapsto-z_j$ similarly shows that the integrals of
all terms above vanish, except possibly the last. We expand that term, obtaining
\begin{align*}
f_1f_2f_3&=2|z_1|^2|z_2|^2|z_3|^2\\
&\quad
+|z_1|^2(z_2^2\bar z_3^2+\bar z_2^2z_3^2)
+|z_2|^2(z_1^2\bar z_3^2+\bar z_1^2z_3^2)
+|z_3|^2(z_1^2\bar z_2^2+\bar z_1^2z_2^2).
\end{align*}
Consideration of isometries $z_j\mapsto iz_j$ shows that the integrals of all terms here except the first vanish,
whereupon we obtain
\[
\int_{\mc S^{2N-1}}f^3\,\mr dV=12\int_{\mc S^{2N-1}}|z_1|^2|z_2|^2|z_3|^2\,\mr dV
=12\,\mr{Vol}\big(\mc S^{2N-1}\big).
\]

Now by~\eqref{DirectSum}, there are functions $F_k\in\mc H_k$ such that
$f^3=F_3+r^2F_2+r^4F_1+r^6F_0$, where the restriction of each $F_k$ to $\mc S^{2N+1}$ is an
eigenfunction of its Laplacian,
with $\int_{\mc S^{2N-1}}F_k\,\mr dV=0$ for $k=3,2,1$. The fact that $\int_{\mc S^{2N-1}}f^3\,\mr dV>0$
thus implies that $F_0>0$, and the same conclusion then holds for the corresponding decomposition
$\vp^3=\Phi_3+r^2\Phi_2+r^4\Phi_1+r^6\Phi_0$
into eigenvalues of the Laplacian on $\mb{CP}^N\approx\mc S^{2N+1}/\mc S^1$. The result follows.
\end{proof}

\end{document}